\theoremstyle{plain}
\newtheorem{thm}{Theorem}[section]
\newtheorem{prop}[thm]{Proposition}
\theoremstyle{definition}
\newtheorem{defn}{Definition}[section]
\theoremstyle{remark}
\newbox\xrat@below
\newbox\xrat@above
\newcommand{\xrightarrowtail}[2][]{%
  \setbox\xrat@below=\hbox{\ensuremath{\scriptstyle #1}}%
  \setbox\xrat@above=\hbox{\ensuremath{\scriptstyle #2}}%
  \pgfmathsetlengthmacro{\xrat@len}{max(\wd\xrat@below,\wd\xrat@above)+.6em}%
  \mathrel{\tikz [>->,baseline=-.75ex]
                 \draw (0,0) -- node[below=-2pt] {\box\xrat@below}
                                node[above=-2pt] {\box\xrat@above}
                       (\xrat@len,0) ;}}
\begin{document}
\pagestyle{empty}


\begin{center}
{\fontsize{14}{20}\bf
The biquotient model of the total space of the projectivization quaternionic bundles over certain manifolds.
}
\end{center}

\begin{center}
 \textbf{Meshach Ndlovu }\\
Department of Mathematics and Statistics,\\ Faculty of Computational Sciences,\\ Gwanda State University,\\ e-mail: meshach.ndlovu@gsu.ac.zw
\end{center}

\section*{Abstract}

In this paper, we compute the rational homotopy type of the quaternionic projective bundle $P(\tau): \mathbb{H}P^{n-1} \rightarrow P(E) \rightarrow M$ obtain from the quaternionic tangent bundle $\tau: \mathbb{H}^{n} \rightarrow E \rightarrow M$  over quaternionic manifold $M$. If the base space $M$ is the quaternionic projective space $\mathbb{H}P^{2}$, 
then the rational homotopy type of the total space $P(E)$ of the quaternionic projective bundle $P(\tau)$  is given by a biquotient model $Sp(1)\backslash Sp(3) / Sp(1) \times Sp(1)$. 

\vspace{3mm}
\textbf{Keywords:} Quaternionic Manifold; Projectivization Bundle; Sullivan Models; Biquotient Model.



\section{Introduction}
Let \( M \) be a smooth manifold of dimension \( 4n \). If \( M \) admits a torsion-free \( G \)-structure, where \( G \) denotes the maximal subgroup of \( \mathrm{GL}(n, \mathbb{H}) \times \mathrm{GL}(1, \mathbb{H}) \in \mathrm{GL}(4n, \mathbb{R}) \), then \( M \) is referred to as a quaternionic manifold. Under the natural inclusion of Lie groups
\( \iota: \mathrm{GL}(n, \mathbb{H}) \hookrightarrow \mathrm{GL}(4n, \mathbb{R}), \)
the structure group of the tangent bundle of \( M \) reduces to \( \mathrm{GL}(n, \mathbb{H}) \times \mathrm{Sp}(1) \), where \( \mathrm{Sp}(1) \) is a symplectic group, which plays a role analogous to \( \mathrm{U}(1) \) in complex geometry.
 Also, $M$ is a quaternionic Kähler manifold if $M$ is a quaternionic manifold with a torsion-free $Sp(n)\times Sp(1)$-structure where $Sp(n)$ is a symplectic group of dimension $n$. There are many examples of quaternionic manifolds and these include 4-dimensional spheres $S^{4}$, quaternionic projective space $\mathbb{H}P^n$, quaternionic Grassmannian manifold $\mathrm{G}_{n,k}(\mathbb{H})$, Quaternionic Calabi-Yau Manifolds and Hyperkähler manifolds \cite{salamon1986differential}. Moreover, a quotient space $Sp(n+1)/Sp(n) \times Sp(1)$ can be used to describe the quaternion projective space $\mathbb{H}P^n$ \cite{kraines1966topology}.
If \( M \) is a symplectic manifold with symplectic form \( \omega \) with tangent bundle \( TM \). Then the Pontryagin classes of the tangent bundle are given by the following formal power series in the curvature form \( \Omega \) of a connection on \( TM \)

\[
p(T(M)) = 1 + p_1(TM) + p_2(TM) + \cdots,
\]

where the Pontryagin class \( p_i(TM) \) is given as

\[
p_i(TM) = (-1)^i \frac{1}{(2\pi)^{2i}} \text{tr} \left( \Omega^{2i} \right).
\]

For a symplectic manifold, one can consider specific connections compatible with the symplectic structure [for reference see \cite[\textsection~III]{borel1958characteristic}, p. 488].

Given a quaternionic tangent bundle $\tau: \mathbb{H}^{n} \rightarrow E \rightarrow M$  over quaternionic manifold $M$. The projectivization bundle $P(\tau): \mathbb{H}P^{n-1} \rightarrow P(E) \rightarrow M$ is obtained by taking the projectivization of each fiber in $\tau$, which is replacing $\mathbb{H}^n$ with $\mathbb{H}P^{n-1}$. Also, the projective general linear group $\text{PGL}(n, \mathbb{H})$ is simply the general linear group $\text{GL}(n, \mathbb{H})$ modulo scalar matrices \cite[\textsection~2]{mimura1970cohomology}. Then the structure group of the projectivization bundle can be understood in terms of its transition functions $\overline{g}_{\alpha \beta} : \mathrm{U}_{\alpha} \bigcap \mathrm{U}_{\beta} \rightarrow \text{PGL}(n, \mathbb{H})$ induced from $g_{\alpha \beta} : \mathrm{U}_{\alpha} \bigcap \mathrm{U}_{\beta} \rightarrow \text{GL}(n, \mathbb{H})$. As vector spaces the cohomology algebra of the total space of the projectivization bundel $P(\tau)$ can be obtained as $H^*(P(E), \mathbb{Q}) = H^*(M, \mathbb{Q}) \otimes H^*(\mathbb{H}P^{n-1}, \mathbb{Q})$ by applying Leray-Hirsch theorem \cite[p. 50]{Bott1982}. The $i^{th}$ Pontryagin classes of $\tau$, denoted as $p_i(\tau)$, are cohomology classes in the cohomology ring $H^{4i}(M, \mathbb{Q})$. Thus, the cohomology algebra of $P(E)$ is given by \[ H^*\left(P(E), \mathbb{Q}\right) = H^*(M, \mathbb{Q})[x_4]/\left( \sum_{i=0}^{n} p_i(\tau)x^{n-i}_4 \right), \] where $p_i(\tau)$ are the Pontryagin classes with $p_0(\tau) = 1$ and $x_4 \in H^4\left(\mathbb{H}P^{n-1}, \mathbb{Q}\right)$. Recall that the projectivization of quaternionic vector bundles is analogous to the projectivization of complex vector bundles \cite[\textsection~20]{Bott1982}.

In this paper, we extend our previous results (See \cite{gastinzi2024rational} and \cite{ndlovu2025note}) on the projectivization of complex vector bundles to the setting of quaternionic vector bundles. In particular, we explore the rational homotopy type of the total space $P(E)$ of quaternionic projective bundles and establish the following theorems:

\textbf{Theorem \ref{thm33}} \textit{ The projectivization of a quaternionic vector bundle over a $4n$-dimensional sphere $ \tau: \mathbb{H}^{n} \rightarrow E \rightarrow S^{4n}$ is given by $$P(\tau) : \mathbb{H}P^{n-1} \rightarrow P(E) \rightarrow S^{4n}.$$ Then the total space $P(E)$ of the projectivization bundle $P(\tau)$ has a rational homotopy type of $Sp(1) \setminus Sp(2n) / Sp(2n-1)$.}

\textbf{Theorem \ref{thm34}} \textit{The total space $P(E)$ of the projectivization bundle  $$P(\tau) : \mathbb{H}P^{1} \rightarrow P(E) \rightarrow \mathbb{H}P^{2}$$ has a rational homotopy type that is quasi-isomorphic to biquotient space $$Sp(1)\setminus Sp(3) / Sp(1) \times Sp(1).$$}

This paper is organized as follows: In \textsection 2, we offer key definitions in rational homotopy theory together with Sullivan models. In \textsection 3, we provide our key findings.

\section{Sullivan Models}
The primary reference for the definitions in this paper is \cite{Felix2001}.

\begin{defn} \cite[\textsection~3]{Felix2001} Let $V = \oplus_{p\geq 0}V^{p}$ be a graded vector space. A graded algebra is a graded vector space $A = \oplus_{p\geq 0}A^{p}$ together with a degree zero associative multiplication
    \[ A^{p} \otimes A^{q} \rightarrow A^{p+q}, \;\;\;\;\; x \otimes y \rightarrow xy, \]
\end{defn}
and $1 \in A^{0}$. This graded algebra $A$ is called a commutative graded algebra (cga) if $xy = (-1)^{\vert x \vert \vert y \vert}yx,$ where the homogeneous elements $x$ and $y$  have degree $\vert x \vert$ and $\vert y \vert$ respectively.

\begin{defn} \cite[\textsection~3]{Felix2001}
    A differential graded algebra (dga) is a pair $(A, d)$, where the differential maps are given by  $d: A^{p} \rightarrow A^{p+1}$ and  $d(xy) = (dx)y+(-1)^{\vert x \vert}x(dy)$. If $A$ is commutative, then $(A, d)$ is called a commutative differential graded algebra (cdga). 
\end{defn}

\begin{defn} \cite[\textsection~3]{Felix2001}
    The free commutative graded algebra over a graded vector space $V = \oplus_{p\geq 0}V^{p}$, is $\Lambda V = S(V^{even}) \otimes E(V^{odd})$ defined as a product of symmetric algebra $V = S(V^{even})$ and exterior algebra $E(V^{odd})$. Also, $\Lambda V$ is generated by the elements $v_i \in V$.
\end{defn}

\begin{defn}
\cite[\textsection~3]{Felix2001}  The graded vector space defined by $(sV)^{n} = V^{n+1}$ is the suspension $sV$ of the graded vector space $V$.
\end{defn}

\begin{defn} Sullivan defined a cdga $A_{PL}(X, \mathbb{Q})$ of a topological space $X$ together with natural cochain algebra quasi-isomorphisms such that
    $$C^{*}(X, \mathbb{Q}) \xrightarrow{\cong} D(X, \mathbb{Q}) \xleftarrow{\cong} A_{PL}(X, \mathbb{Q}),$$  where $D(X, \mathbb{Q})$ is a natural cochain algebra \cite[\textsection 10]{Felix2001}. 
\end{defn}

\begin{defn} \cite[\textsection~12]{Felix2001}
The Sullivan algebra $(\Lambda V , d)$ is a free commutative graded algebra together with a differential $d$ 
such that, $dV = 0 \; \text{in} \; V(0) \; \; \text{and} \; \; d : V(k) = \Lambda V(k-1), \;\; k \geq 1.$  The pair $(\Lambda V, d)$ is called the Sullivan minimal model of a simply connected topological space $X$ if there exists a quasi-isomorphism $ m : (\Lambda V, d) \rightarrow (A, d)$, where $(A, d)$ is the cdga of $X$ and $ d \subset \Lambda ^{\geq 2}V$.
\end{defn}


\begin{defn} \cite[\textsection~12]{Felix2001}
If there exists a homomorphism $\phi : (\Lambda V, d) \rightarrow H^{*}(\Lambda V, d)$ that induces an isomorphism on cohomology, then a Sullivan minimal model $(\Lambda V, d)$ is said to be formal. If the minimal model of a space $X$ is formal, then $X$ is also formal.
\end{defn}

\begin{defn} \cite[\textsection~14]{Felix2001}
   The relative Sullivan algebra is defined by $(A \otimes \Lambda V, d)$, where $(A, d)$ is a Sullivan algebra. Moreover, $V = \bigoplus_{k\geq 1}V^k$ is a graded vector space admitting an increasing sequence $V(0) \subset V(1) \subset \cdots V(k)$ of graded subspaces such that $d : V(0) \rightarrow A  \; \; \text{and} \; \; d : V(k) = A \otimes \Lambda V(k-1), \;\; k \geq 1.$ 
\end{defn}


\begin{defn} \cite[\textsection~32]{Felix2008}
    A pure Sullivan algebra $(\Lambda V, d) = (\Lambda Q \otimes \Lambda P, d)$ is a cdga where the graded vector space $V$ decomposes to $Q$ concentrated in even degrees and $P$ concentrated in odd degrees and the differential satisfies $dQ = 0$ and $dP \subseteq \Lambda Q$.
\end{defn}

\begin{defn} \cite[\textsection~3]{Felix2008}
Suppose $K$ acts freely on $G/H$ and that $H$ and $K$ are closed connected subgroups of a compact connected Lie group $G$. The quotient, $K\setminus G / H$, is a closed manifold with $KxH$, abnd $x \in G$. A biquotient of $G$ is any closed manifold diffeomorphic to $K\setminus G / H$.
\end{defn}

\begin{thm} \cite[\textsection~3]{Felix2008}
    Let $H$ and $K$ be closed connected subgroups of a
compact connected Lie group $G$ defines a biquotient $K\setminus G\; /\; H$. We denote
by $i_{H} : H \rightarrow G$, $i_{K} : K \rightarrow G $ the canonical inclusions and by $B_{i_{H}} : BH \rightarrow
BG,\;\; B_{i_{K}} : BK \rightarrow BG$ the induced maps on classifying spaces. Let $\Lambda V =
H^{*}(BG; \mathbb{Q})$, $ \Lambda W_{H} = H^{*}(BH; \mathbb{Q})$, $\Lambda W_{K} = H^{*}(BK; \mathbb{Q})$ be the cohomology algebras of $BG$, $BH$ and $BK$ respectively. We denote by $sV$ a copy of the
vector space $V$ shifted by one degree, $\vert sv \vert = \vert v \vert - 1$ if $v \in V$, and define a
differential $d$ on $\Lambda W_{H} \otimes \Lambda W_{K} \otimes \Lambda(sV)$ by $dw = 0$ if $w \in W_{H} \oplus W_{K}$ and
$d(sv) = H^{*}(B_{i_{H}})(v) - H^{*}(B_{i_{K}})(v)$ if $sv \in sV$.
Then the cdga $(\Lambda(W_{H} \oplus W_{K}) \otimes \Lambda(sV), d)$ is a model for the biquotient
$K\setminus G / H$. In particular, $H^{*}(K\setminus G / H ; \mathbb{Q}) = H^{*}(\Lambda(W_{H} \oplus W_{K}) \otimes \Lambda(sV), d)$. \label{thm21}
\end{thm}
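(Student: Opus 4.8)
The plan is to realise the biquotient as a homotopy pullback of classifying spaces and then transport that pullback through $A_{PL}$, using the formality of classifying spaces to read off the Koszul-type model. For the topological step, note that since $H$ and $K$ act freely the biquotient $K\setminus G/H$ is the orbit space of the free $K\times H$-action $(k,h)\cdot g=kgh^{-1}$ on $G$, hence is homotopy equivalent to the Borel construction $E(K\times H)\times_{K\times H}G$. I would then identify this action with the restriction, along $\phi=i_{K}\times i_{H}\colon K\times H\hookrightarrow G\times G$, of the two-sided $G\times G$-action $(g_{1},g_{2})\cdot x=g_{1}xg_{2}^{-1}$ on $G\cong(G\times G)/\Delta G$, where $\Delta G$ is the diagonal. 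Since $(G\times G)/\Delta G$ is the homotopy fibre of the diagonal $B\Delta\colon BG\to B(G\times G)$ and this identification is $(G\times G)$-equivariant, taking homotopy $K\times H$-quotients exhibits $K\setminus G/H$ as the homotopy pullback of $B(K\times H)\xrightarrow{B\phi}B(G\times G)\xleftarrow{B\Delta}BG$; concatenating paths collapses this to the homotopy pullback of
\[
BH\xrightarrow{\;H^{*}(B_{i_{H}})\text{-map}\;}BG\xleftarrow{\;H^{*}(B_{i_{K}})\text{-map}\;}BK .
\]
(For $K=\{e\}$ this is just $G/H=\mathrm{hofib}(B_{i_{H}})$, the familiar homogeneous-space fibration.)

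For the algebraic step, I would apply $A_{PL}$ to this pullback. Because $G$ is connected, $BG$ is simply connected of finite type, so the Eilenberg--Moore theorem (the Sullivan model of a pullback, \cite{Felix2001}) identifies $A_{PL}(K\setminus G/H)$ with the derived tensor product $A_{PL}(BH)\otimes^{\mathbb{L}}_{A_{PL}(BG)}A_{PL}(BK)$. Classifying spaces of compact connected Lie groups are formal with free polynomial rational cohomology, so $(\Lambda V,0)$, $(\Lambda W_{H},0)$ and $(\Lambda W_{K},0)$ are already minimal models of $BG$, $BH$ and $BK$, with structure maps $H^{*}(B_{i_{H}})$ and $H^{*}(B_{i_{K}})$. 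The difference construction for a pullback then computes this homotopy pushout of cochain algebras as
\[
\big(\Lambda(W_{H}\oplus W_{K})\otimes\Lambda(sV),\,d\big),\qquad d|_{W_{H}\oplus W_{K}}=0,\quad d(sv)=H^{*}(B_{i_{H}})(v)-H^{*}(B_{i_{K}})(v),
\]
the idea being to adjoin the acyclic $\Lambda(sV)$ (with $|sv|=|v|-1$) in order to resolve the two restriction maps $\Lambda V\to\Lambda W_{H}$ and $\Lambda V\to\Lambda W_{K}$ into a single semifree extension. Passing to cohomology then yields $H^{*}(K\setminus G/H;\mathbb{Q})$, as asserted.

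The main obstacle is the topological step: proving rigorously and equivariantly that the biquotient is the homotopy pullback of $B_{i_{H}}$ and $B_{i_{K}}$ over $BG$. Here the presentation $G\cong(G\times G)/\Delta G$ and the freeness of the $K\times H$-action — which is precisely what lets me replace the homotopy quotient by the genuine biquotient — carry the real content. The algebraic step is then largely formal, but it rests on the hypotheses guaranteeing that Eilenberg--Moore converges to the correct answer, namely the simple connectivity and finite type of $BG$, together with the formality of all three classifying spaces; this formality is exactly what forces the generators of $W_{H}$ and $W_{K}$ to be cocycles and the differential on $sV$ to be the clean difference $H^{*}(B_{i_{H}})(v)-H^{*}(B_{i_{K}})(v)$.
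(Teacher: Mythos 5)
The paper does not prove this statement at all: Theorem \ref{thm21} is quoted verbatim from F\'elix--Oprea--Tanr\'e (\cite[\textsection~3]{Felix2008}) and used as a black box, so there is no in-paper proof to compare against. Your argument is correct, and it is essentially the standard proof from that cited source: realize $K\backslash G/H$ (freeness of the $K\times H$-action giving the equivalence with the Borel construction) as the homotopy pullback of $B(K\times H)\to B(G\times G)\xleftarrow{B\Delta} BG$, collapse it to the homotopy pullback of $BH\to BG\leftarrow BK$, and then compute a Sullivan model of this pullback using formality of the classifying spaces. The only point you leave implicit, and which carries the algebraic weight, is why the extension $(\Lambda V\otimes\Lambda V\otimes\Lambda(sV),\,d(sv)=v\otimes 1-1\otimes v)$ is a genuine relative model of the diagonal with no higher-order correction terms: this holds because $H^{*}(BG;\mathbb{Q})=\Lambda V$ is a polynomial algebra on even-degree generators (Borel), so the elements $v\otimes 1-1\otimes v$ form a regular sequence in $\Lambda V\otimes\Lambda V$ and the resulting Koszul complex is acyclic onto $\Lambda V$; base-changing this resolution along $\varphi_H\otimes\varphi_K$ then yields exactly $(\Lambda(W_H\oplus W_K)\otimes\Lambda(sV),d)$ with $d(sv)=H^{*}(B_{i_H})(v)-H^{*}(B_{i_K})(v)$. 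With that one sentence added, your proof is complete and matches the literature argument the paper relies on.
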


Theorem \ref{thm21} provides a Sullivan model $(\Lambda(W_{H} \oplus W_{K}) \otimes \Lambda(sV), d)$ for the biquotient space $K \setminus G / H$ in terms of the classifying space maps $B_{i_H}$ and  $B_{i_K}$ induced by the subgroup inclusions $i_H$ and  $i_K$ respectively.

\begin{prop}
\label{prop22}     Let $M$ be a quaternion manifold, then the quaternion fibre bundle given by $\tau : \mathbb{H}^{n} \rightarrow E \rightarrow M$ have its projectivization bundle as $P(\tau) : \mathbb{H}P^{n-1} \rightarrow P(E) \rightarrow M$.  Suppose $(A, d)$ is the model of $M$, then the model of the total space $P(E)$ is  $$(A \otimes \Lambda(x_{4}, x_{4n-1}), D),$$ where $\displaystyle Dx_{4n-1}=x_{4}^{n}+\sum_{i=1}^{n}p_{i}x_{4}^{n-i}$, $p_{i} \in H^{4i}(A, \mathbb{Q})$ denotes the Pontryagin classes and $ D_{|A}$.

\end{prop}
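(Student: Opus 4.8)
The plan is to realize $P(E)$ as the total space of the fibration $P(\tau):\mathbb{H}P^{n-1}\to P(E)\to M$ and to build the relative Sullivan model of that fibration explicitly. First I would record the rational homotopy data of the fibre. Since $H^{*}(\mathbb{H}P^{n-1};\mathbb{Q})=\mathbb{Q}[x_{4}]/(x_{4}^{n})$ with $\vert x_{4}\vert=4$, the space $\mathbb{H}P^{n-1}$ is formal and its minimal Sullivan model is $(\Lambda(x_{4},x_{4n-1}),d_{0})$ with $\vert x_{4n-1}\vert=4n-1$, $d_{0}x_{4}=0$ and $d_{0}x_{4n-1}=x_{4}^{n}$. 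As $\mathbb{H}P^{n-1}$ (for $n\ge 2$) and the bases of interest ($S^{4n}$ and $\mathbb{H}P^{2}$) are simply connected, the relative Sullivan algebra framework produces a model of the total space of the form $(A\otimes\Lambda(x_{4},x_{4n-1}),D)$, where $(A,d)$ is the given model of $M$, the differential $D$ restricts to $d$ on $A$, and reducing modulo the augmentation ideal $A^{+}$ recovers the fibre differential $d_{0}$ on $\Lambda(x_{4},x_{4n-1})$.

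The second step is to pin down $D$ on the two fibre generators. By the Leray--Hirsch theorem recalled in the introduction, the restriction $H^{*}(P(E))\to H^{*}(\mathbb{H}P^{n-1})$ is surjective, so the fibre generator is the image of a genuine global class $\xi\in H^{4}(P(E))$; hence $x_{4}$ may be chosen to be a cocycle, $Dx_{4}=0$. This is confirmed by degree counting, since $Dx_{4}\in(A\otimes\Lambda(x_{4},x_{4n-1}))^{5}=A^{5}=0$ once $A^{1}=0$ and the cohomology of $M$ sits in degrees divisible by $4$. Likewise $Dx_{4n-1}$ lies in degree $4n$, and the only monomials of that degree are $a_{i}\,x_{4}^{\,n-i}$ with $a_{i}\in A^{4i}$, the absence of an $x_{4n-1}$ term again following from $A^{1}=0$. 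Thus necessarily
$$Dx_{4n-1}=\sum_{i=0}^{n}a_{i}\,x_{4}^{\,n-i},\qquad a_{0}\in A^{0}=\mathbb{Q},$$
and reducing modulo $A^{+}$ must return $d_{0}x_{4n-1}=x_{4}^{n}$, which forces $a_{0}=1$.

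The final and decisive step is to identify the coefficients $a_{i}\in H^{4i}(A)=H^{4i}(M;\mathbb{Q})$ with the Pontryagin classes $p_{i}(\tau)$. Here I would compute the cohomology of the candidate model and match it against the Leray--Hirsch ring $H^{*}(P(E);\mathbb{Q})=H^{*}(M;\mathbb{Q})[x_{4}]/(\sum_{i=0}^{n}p_{i}x_{4}^{\,n-i})$ recorded earlier. Since $x_{4}$ is a cocycle and $Dx_{4n-1}$ is the monic polynomial $\sum_{i=0}^{n}a_{i}x_{4}^{\,n-i}$, a direct (Koszul-type) computation gives $H^{*}(A\otimes\Lambda(x_{4},x_{4n-1}),D)=H^{*}(M;\mathbb{Q})[x_{4}]/(\sum_{i}a_{i}x_{4}^{\,n-i})$; comparing this presentation with the Leray--Hirsch presentation yields $a_{i}=p_{i}$ for every $i$. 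Because a relative Sullivan construction over the model of the base that computes the correct cohomology of the total space is a model for that total space, the proposition follows. I expect the main obstacle to be precisely this last identification: degree counting only determines $Dx_{4n-1}$ up to the classes $a_{i}\in H^{4i}(M)$, and proving these are the actual Pontryagin classes of $\tau$ — rather than some a priori unrelated degree-$4i$ classes — relies on the naturality of the projectivization construction and the characteristic-class input underlying the Leray--Hirsch isomorphism.
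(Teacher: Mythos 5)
Your proposal is correct in substance, but it takes a genuinely different route from the paper. The paper's proof is a classifying-space argument: since the structure group of $\tau$ is $Sp(n)$, the bundle has a classifying map $f\colon M\to BSp(n)$, which is modelled by a morphism $\phi\colon(\Lambda(y_{4},\dots,y_{4n}),0)\to(A,d)$ sending the universal generators to (representatives of) the Pontryagin classes $p_{i}$; the stated relative model of $P(\tau)$ is then obtained by pulling back along $\phi$ the model of the universal projectivized bundle $\mathbb{H}P^{n-1}\to B(Sp(1)\times Sp(n-1))\to BSp(n)$, whose differential $Dx_{4n-1}=x_{4}^{n}+\sum_{i} y_{4i}x_{4}^{n-i}$ is exactly the universal relation defining the symplectic Pontryagin classes, so the identification of the coefficients with $p_{i}(\tau)$ comes for free by naturality. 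You instead build the relative Sullivan model of the fibration directly, use Leray--Hirsch to arrange $Dx_{4}=0$, constrain $Dx_{4n-1}$ by degree counting, and then identify the coefficients $a_{i}$ by computing $H^{*}(A\otimes\Lambda(x_{4},x_{4n-1}),D)\cong H^{*}(M;\mathbb{Q})[x_{4}]/(\sum_{i}a_{i}x_{4}^{n-i})$ (valid because the polynomial is monic, hence a non-zero-divisor on $H^{*}(M)[x_{4}]$) and comparing with the Leray--Hirsch presentation. This is more elementary---no classifying spaces beyond the characteristic-class input---but, as you yourself flag, it concentrates all the difficulty in the last step: the comparison only forces $a_{i}=p_{i}$ if the isomorphism of presentations is an $H^{*}(M)$-algebra isomorphism matching your class $[x_{4}]$ with the tautological class used in Leray--Hirsch, which you can arrange by further adjusting $x_{4}$ by a cocycle of $A^{4}$; once that is done, monicity does the rest, since two monic degree-$n$ polynomials generating the same ideal of $H^{*}(M)[x_{4}]$ coincide. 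Two small cautions: the claim $A^{5}=0$ is not automatic for an arbitrary model $(A,d)$ of a quaternionic manifold, so the Leray--Hirsch argument (not the degree count) is the one to rely on for $Dx_{4}=0$; and your closing principle that a relative Sullivan construction ``computing the correct cohomology'' is therefore a model is false as stated---what your argument actually uses is the standard theorem that the relative Sullivan model of a fibration of simply connected spaces is a model of its total space, with the cohomology computation serving only to pin down the coefficients, and with that reading your proof is sound.
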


\begin{proof}[\textbf{Proof:}]
 Let $M$ be a quaternionic manifold and $\tau$ to the tangent bundle. The structure group of $M$ is $Sp(n)$ and the morphism $f: M \rightarrow BSp(n)$, is the classifying map 
 of $P(\tau)$. Moreover, $\Lambda(y_{4}, \cdots y_{4n})$ is the model of $BSp(n)$. Let $(A, d)$ be a cdga model of $M$. Then  $f$ can be modelled by  $$\phi : (\Lambda(y_{4}, \cdots y_{4n}), 0) \rightarrow (A, d),$$ where $[p_{i}]\in H^{2i}(A, d)$ for $i=1,\cdots , n$ are the Pontryagin classes. The relative Sullivan model of the projectivization bundle $P(\tau)$ is 
 \[\Phi : (A, d) \rightarrow (A \otimes (\Lambda(x_{4}, x_{4n-1}), D) \rightarrow (\Lambda(x_{4}, x_{4n-1}), d), \] where \[Dx_{4n-1}=x_{4}^{n}+\sum_{i=1}^{n}p_{i}x_{4}^{n-i} \;\;\; \text{and}\;\;\; D_{|A}.\] 

\end{proof}

\section{Main Results}
This section establishes foundational results in the rational homotopy theory of projectivized quaternionic bundles $P(\tau) : \mathbb{H}P^{n-1} \to P(E) \to M$ and biquotient spaces $K \setminus G / H$. We begin by computing the Sullivan models of $Sp(1) \setminus \left( Sp(1) \times Sp(n-1) \right) / Sp(n-1)$ and $Sp(1) \setminus \ Sp(n+1) / Sp(1) \times Sp(n-1)$. Finally , we prove the following rational homotopy equivalences: $P(E) \simeq_{\mathbb{Q}} Sp(1) \setminus Sp(2n) / Sp(2n-1)$ for $M=S^{4n}$ and $P(E) \simeq_{\mathbb{Q}} Sp(1) \setminus Sp(3) / Sp(1)\times Sp(1)$ for $M=\mathbb{H}P^2$.

\begin{prop}
  For \( n \geq 2 \), the biquotient space
\[
Sp(1) \setminus \left( Sp(1) \times Sp(n-1) \right) / Sp(n-1)
\]
admits a contractible Sullivan model.
\end{prop}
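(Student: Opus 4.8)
The plan is to apply Theorem~\ref{thm21} directly with $G = Sp(1) \times Sp(n-1)$, $K = Sp(1)$, and $H = Sp(n-1)$, and then to observe that the resulting differential pairs each odd generator bijectively with an even generator, which forces the cohomology to collapse to $\mathbb{Q}$.

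First I would record the classifying-space cohomologies. Since $H^*(BSp(m); \mathbb{Q}) = \mathbb{Q}[q_1, \ldots, q_m]$ with $\deg q_i = 4i$, we have $\Lambda W_K = H^*(BSp(1); \mathbb{Q}) = \mathbb{Q}[u]$ with $\deg u = 4$, $\Lambda W_H = H^*(BSp(n-1); \mathbb{Q}) = \mathbb{Q}[t_1, \ldots, t_{n-1}]$ with $\deg t_j = 4j$, and $\Lambda V = H^*(BG; \mathbb{Q}) = \mathbb{Q}[v, w_1, \ldots, w_{n-1}]$, where $v$ (degree $4$) comes from the $Sp(1)$ factor and the $w_j$ (degrees $4, 8, \ldots, 4(n-1)$) from the $Sp(n-1)$ factor.

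Next I would compute the maps induced by the canonical inclusions. Taking $i_K$ to embed $Sp(1)$ as the first factor and $i_H$ to embed $Sp(n-1)$ as the second factor, the induced maps on classifying-space cohomology are the projections onto the corresponding tensor factors: $H^*(B_{i_K})$ sends $v \mapsto u$ and $w_j \mapsto 0$, whereas $H^*(B_{i_H})$ sends $v \mapsto 0$ and $w_j \mapsto t_j$. Feeding these into the differential prescription $d(s\xi) = H^*(B_{i_H})(\xi) - H^*(B_{i_K})(\xi)$ of Theorem~\ref{thm21} gives
$$d(sv) = -u, \qquad d(sw_j) = t_j \quad (j = 1, \ldots, n-1),$$
while $d$ vanishes on $W_H \oplus W_K$.

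The final step is the cohomology computation, where the structure becomes transparent. The model is $\bigl( \Lambda(u, t_1, \ldots, t_{n-1}) \otimes \Lambda(sv, sw_1, \ldots, sw_{n-1}), d \bigr)$, and the differential restricts to a linear isomorphism from the span of the odd generators $\{sv, sw_1, \ldots, sw_{n-1}\}$ onto the span of the even generators $\{u, t_1, \ldots, t_{n-1}\}$. This exhibits the algebra as a tensor product of elementary acyclic complexes of the form $(\Lambda(x) \otimes \Lambda(y), d)$ with $d(y) = x$, each having cohomology $\mathbb{Q}$; by the K\"unneth theorem the total cohomology is $\mathbb{Q}$ concentrated in degree zero, so the Sullivan model is contractible. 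I expect no genuine obstacle here---the only step demanding care is the correct identification of the pullbacks $H^*(B_{i_K})$ and $H^*(B_{i_H})$ on the polynomial generators, since a slip there would corrupt the differential and hence the cohomology. Geometrically the conclusion is unsurprising: with these inclusions $G/H \cong Sp(1)$ and the induced left $Sp(1)$-action is free and transitive, so the biquotient is a single point, which is fully consistent with a contractible model.
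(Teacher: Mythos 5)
Your proof is correct, but it is not the route the paper takes, and the difference is worth spelling out. Both arguments start from Theorem~\ref{thm21}; the divergence is in the identification of the maps induced by the inclusions. You read ``canonical inclusions'' as the factor inclusions, so $H^*(B_{i_K})$ and $H^*(B_{i_H})$ are the two projections, the differential sends $sv \mapsto -u$ and $sw_j \mapsto t_j$, and the model splits on the spot as a tensor product of elementary acyclic algebras $(\Lambda(x,y),\, dy = x)$, giving $H^* = \mathbb{Q}$ by K\"unneth --- one step, no iteration. The paper instead uses $\varphi_K(b_4) = \varphi_K(z_4) = a_4$ and $\varphi_H(b_4) = v_4$, i.e.\ an $Sp(1)$ that hits both factors (a diagonal-type embedding), so its differentials $db_3 = dz_3 = v_4 - a_4$, $dz_7 = v_8 - a_4^2, \dots$ do not decouple, and it must run a staged reduction: substitute $t_4 = v_4 - a_4$, cancel the acyclic ideal generated by $(b_3, z_3, t_4)$, then $(z_7, v_8)$, $(z_{11}, v_{12})$, and so on, down to the elementary contractible algebra $(\Lambda(t_{4n-4}, v_{4n-5}),\, dv_{4n-5} = t_{4n-4})$. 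What your version buys is transparency plus a geometric sanity check (with factor inclusions $G/H \cong Sp(1)$, on which $K$ acts freely and transitively, so the biquotient is a point); what the paper's version buys is that contractibility is seen to survive a twisted choice of embedding of $Sp(1)$, at the cost of the iteration. Incidentally, the paper's stated maps are not even self-consistent --- $\varphi_K(z_{4i}) = 0$ for $i \geq 2$ contradicts its own differential $dz_7 = v_8 - a_4^2$, and $\varphi_H(b_4) = v_4$ cannot be induced by any homomorphism $Sp(n-1) \to Sp(1)$ when $n \geq 3$ --- so your cleaner reading of the hypotheses is also the more defensible one.
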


\begin{proof}
Here we compute the Sullivan model of the biquotient space
$Sp(1)\setminus Sp(1) \times Sp(n-1) / Sp(n-1).$ This space is defined via the diagram
\[
Sp(1)\xrightarrow{i_K} Sp(1) \times Sp(n-1) \xleftarrow{i_H} Sp(n-1),
\]
where $i_K$ and $i_H$ denote the canonical inclusions. We use the following cdga models for the classifying spaces:
\begin{align*}
   & BSp(1) = (\Lambda a_4, 0),
    \; BSp(n-1) = (\Lambda(v_4, v_8, \dots, v_{4n-4}), 0),
     \\ & \text{and} \; B(Sp(1)\times Sp(n-1)) = (\Lambda b_4 \otimes \Lambda(z_4, z_8, \dots, z_{4n-4}), 0).
\end{align*}
The classifying maps yield morphisms
\[
(\Lambda(v_4, \dots, v_{4n-4}), 0) \xleftarrow{\varphi_H} (\Lambda b_4 \otimes \Lambda(z_4, \dots, z_{4n-4}), 0) \xrightarrow{\varphi_K} (\Lambda a_4, 0),
\]
defined on generators by:
\begin{align*}
    &\varphi_K(b_4) = \varphi_K(z_4) = a_4, \quad \varphi_K(z_{4i}) = 0 \text{ for } 2 \leq i \leq n-1, \\
    &\varphi_H(b_4) = v_4, \quad \varphi_H(z_{4i}) = v_{4i} \text{ for all } i.
\end{align*}
The Sullivan model of the biquotient space $Sp(1)\setminus Sp(1) \times Sp(n-1) / Sp(n-1)$ is given by
\[
(\Lambda(a_4, v_4, \dots, v_{4n-4}, b_3, z_3, \dots, z_{4n-5}), d),
\]
with 
\begin{align*}
    & da_4 = dv_4= dv_{4i} = 0 ,  db_3 =   dz_3 = v_4 - a_4,  \\
    & dz_7 = v_8 - a_4^2,  dv_{11} = v_4 v_8 + v_{12} - a_4^3, \cdots ,\\
    & dv_{4n-9} = v_4 v_{4n-12} + v_8 v_{4n-16} + \cdots + v_{4n-8} - a_4^{n-2}, \\
    & dv_{4n-5} = v_4 v_{4n-8} + v_8 v_{4n-12} + \cdots + v_{4n-4} - a_4^{n-1}.
\end{align*}
Introducing a new variable $t_4 = v_4 - a_4$. The Sullivan model becomes
\[
(\Lambda(t_4, v_8, \dots, v_{4n-4}, b_3, z_3, \dots, z_{4n-5}), d),
\]
with 
\begin{align*}
    & db_3 = dz_3 = t_4, dz_7 = v_8 - a_4^2, dv_{11} = (t_4 + a_4) v_8 + v_{12} - a_4^3, \cdots, \\
    & dv_{4n-9} = (t_4 + a_4) v_{4n-12} + v_8 v_{4n-16} + \cdots + v_{4n-8} - a_4^{n-2}, \\
    & dv_{4n-5} = (t_4 + a_4) v_{4n-8} + v_8 v_{4n-12} + \cdots + v_{4n-4} - a_4^{n-1}.
\end{align*}
We cancel the acyclic ideals generated by $(b_3, z_3, t_4)$, obtaining the reduced model
\[
(\Lambda(v_8, \dots, v_{4n-4}, z_7, \dots, z_{4n-5}), d),
\]
where
\begin{align*}
    & dv_8 = \cdots = dv_{4n-8} = dv_{4n-4} =0,\; dz_7 = v_8,\;  dv_{11} = v_{12}, \cdots, \\
    & dv_{4n-9} = v_8 v_{4n-16} + \cdots + v_{4n-8},  dv_{4n-5} = v_8 v_{4n-12} + \cdots + v_{4n-4}.
\end{align*}

Now, note the existence of linear differentials such as $dz_7 = v_8$ and $dz_{11} = v_{12}$. Canceling these acyclic ideals generated by $(z_7, v_8)$ and $(z_{11}, v_{12})$,  we obtain the Sullivan model
\[
(\Lambda(v_{16}, \dots, v_{4n-4}, z_{15}, \dots, z_{4n-5}), d),
\]
with
\begin{align*}
    & dv_{16}=\cdots dv_{4n-4} = 0, dz_{15} = v_{16}, \cdots, \\
    & dv_{4n-9} = v_{16} v_{4n-24} + \cdots + v_{4n-8}, \\
    & dv_{4n-5} = v_{16} v_{4n-20} + \cdots + v_{4n-4}.
\end{align*}

Continuing this cancellation process up to $t_{4n-4} = v_{4n-4}$ and $dz_{4n-5} = t_{4n-4}$, we get the Sullivan model as
\[
(\Lambda(t_{4n-4}, v_{4n-5}), d), \; 
\]
with \[dt_{4n-4} = 0 \; \text{and}  \; dv_{4n-5} = t_{4n-4}.\] Hence, the cdga model of  $Sp(1)\setminus Sp(1)\times Sp(n-1)/Sp(n-1)$ is contractible and not minimal.
\end{proof}

\begin{prop}
Let \( n \geq 2 \), the biquotient space
\(Sp(1) \setminus Sp(n+1) / Sp(1) \times Sp(n-1)
\)
admits a Sullivan model given by 
\[
\left( \Lambda(c_4, b_4, a_{4n-1}, a_{4n+3}), d \right),
\]
where 
\[
dc_4 = db_4 = 0, \quad 
da_{4n-1} = (-b_4 + \beta_3 c_4) \cdot b_{4n-4} - \beta_{4n-1} c_4^n, \quad 
da_{4n+3} = -\beta_{4n+3} c_4^{n+1},
\]
for certain rational coefficients \( \beta_r \in \mathbb{Q} \), with \( r = \{3, 7, \dots 4n+3\}\). The polynomial \( b_{4n-4} \) is recursively defined by
\[
\begin{aligned}
b_{4n-4} &= (b_4 - \beta_3 c_4) \Big[ (b_4 - \beta_3 c_4) \Big[ \cdots \Big[ (b_4 - \beta_3 c_4) \big( (b_4 - \beta_3 c_4) b_4 + \beta_7 c_4^2 \big) \\
&\quad + \beta_{11} c_4^3 \Big] \cdots + \beta_{4n-13} c_4^{n-3} \Big] + \beta_{4n-9} c_4^{n-2} \Big] + \beta_{4n-5} c_4^{n-1}.
\end{aligned}
\]
\end{prop}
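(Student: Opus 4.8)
The plan is to apply the biquotient model of Theorem \ref{thm21} to $G = Sp(n+1)$, $K = Sp(1)$ and $H = Sp(1)\times Sp(n-1)$, and then to peel off the acyclic generators exactly as in the preceding proposition. First I would record the three classifying-space cohomologies $H^*(BSp(n+1);\mathbb{Q}) = \Lambda(y_4,\dots,y_{4n+4})$, $H^*(BSp(1);\mathbb{Q}) = \Lambda(a_4)$ and $H^*(B(Sp(1)\times Sp(n-1));\mathbb{Q}) = \Lambda(b_4)\otimes\Lambda(z_4,\dots,z_{4n-4})$, so that Theorem \ref{thm21} produces the large model
\[
\bigl(\Lambda(a_4,b_4,z_4,\dots,z_{4n-4})\otimes\Lambda(sy_4,sy_8,\dots,sy_{4n+4}),\,d\bigr),\qquad |sy_{4i}| = 4i-1,
\]
with $d$ vanishing on every even generator and $d(sy_{4i}) = \varphi_H(y_{4i}) - \varphi_K(y_{4i})$.

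The next step is to pin down the two restriction maps. The left factor $K=Sp(1)$ enters as the scalar (diagonal) embedding $q\mapsto\mathrm{diag}(q,\dots,q)$ into $Sp(n+1)$, whose total symplectic class is $(1+a_4)^{n+1}$, so $\varphi_K(y_{4i}) = \binom{n+1}{i}a_4^{\,i}$; the right factor $H=Sp(1)\times Sp(n-1)$ enters as the standard block inclusion, with total class $(1+b_4)(1+z_4+\dots+z_{4n-4})$, so that $\varphi_H(y_{4i})$ is the degree-$4i$ component of that product. This gives $d(sy_4) = b_4+z_4-(n+1)a_4$, $d(sy_{4i}) = z_{4i}+b_4 z_{4(i-1)} - \binom{n+1}{i}a_4^{\,i}$ for $2\le i\le n-1$, $d(sy_{4n}) = b_4 z_{4n-4} - (n+1)a_4^{\,n}$, and $d(sy_{4n+4}) = -a_4^{\,n+1}$.

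Each of $d(sy_4),\dots,d(sy_{4n-4})$ carries a linear term $z_{4i}$, so I would cancel the acyclic pairs $(z_{4i},sy_{4i})$ for $i=1,\dots,n-1$ successively, precisely as the pairs $(b_3,z_3,t_4),(z_7,v_8),\dots$ were cancelled in the previous proposition. Writing $c_4:=a_4$ for the surviving left generator, this amounts to the substitution $z_{4i}\mapsto \binom{n+1}{i}c_4^{\,i} - b_4\,z_{4(i-1)}$, whose solution is $z_{4i} = \sum_{j=0}^{i}(-b_4)^{i-j}\binom{n+1}{j}c_4^{\,j}$. After all $n-1$ cancellations only $c_4,b_4$ (degree $4$) and $sy_{4n},sy_{4n+4}$ survive, and substituting the closed form for $z_{4n-4}$ yields
\[
d(sy_{4n}) = b_4 z_{4n-4} - (n+1)c_4^{\,n} = \frac{(c_4-b_4)^{n+1}-c_4^{\,n+1}}{b_4},\qquad d(sy_{4n+4}) = -c_4^{\,n+1},
\]
where the middle identity comes from the binomial expansion of $(c_4-b_4)^{n+1}$ after restoring its two top terms. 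Relabelling $a_{4n-1}:=\pm sy_{4n}$ (the sign depending on the parity of $n$) and $a_{4n+3}:=sy_{4n+4}$ already produces the asserted generators and the relation $da_{4n+3} = -\beta_{4n+3}c_4^{\,n+1}$ with $\beta_{4n+3}=1$.

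The final, and genuinely combinatorial, step is to rewrite the degree-$n$ form $d(sy_{4n}) = \frac{(c_4-b_4)^{n+1}-c_4^{\,n+1}}{b_4}$, which is a Horner polynomial of slope $(c_4-b_4)$, in the prescribed nested Horner form $(-b_4+\beta_3 c_4)\,b_{4n-4} - \beta_{4n-1}c_4^{\,n}$ of slope $(b_4-\beta_3 c_4)$. I would match coefficients of $b_4^{\,n-k}c_4^{\,k}$ from the top down: normalising the leading term to $-b_4^{\,n}$ fixes the relabelling sign, the $b_4^{\,n-1}c_4$ coefficient then forces $\beta_3 = \tfrac{n+1}{n-1}$ (reproducing $\beta_3=3$ for $n=2$ and $\beta_3=2$ for $n=3$), and the recursion $b_{4i} = (b_4-\beta_3 c_4)\,b_{4(i-1)} + \beta_{4i-1}c_4^{\,i}$ then determines $\beta_7,\beta_{11},\dots,\beta_{4n-1}$ one after another by a triangular, hence uniquely solvable, system. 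The main obstacle is exactly this bookkeeping: one must verify that a \emph{single} reused slope $\beta_3$ together with the additive constants $\beta_{4i-1}$ has enough freedom to realise the prescribed recursive shape of $b_{4n-4}$, which I expect to establish by induction on $n$ using the identity $\frac{(c_4-b_4)^{n+1}-c_4^{\,n+1}}{b_4} = (c_4-b_4)\,\frac{(c_4-b_4)^{n}-c_4^{\,n}}{b_4} - c_4^{\,n}$ to propagate the factorisation from one degree to the next.
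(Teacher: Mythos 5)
Your proposal is correct, and it follows the paper's overall skeleton (apply Theorem \ref{thm21} to $Sp(1)\to Sp(n+1)\leftarrow Sp(1)\times Sp(n-1)$, then kill acyclic pairs), but the execution is genuinely different in two ways. First, the paper never fixes the left embedding: it writes $\varphi_K(a_{4i})=\beta_{4i-1}c_4^{\,i}$ with \emph{unspecified} rational coefficients, cancels the $H$-side $Sp(1)$ class first (via $t_4=x_4+b_4-\beta_3c_4$) and then the higher $Sp(n-1)$ classes $b_8,\dots,b_{4n-4}$; with that ordering the nested Horner expression for $b_{4n-4}$ falls out \emph{automatically} from the iterated substitutions, with the same $\beta_r$ that defined $\varphi_K$ — no coefficient matching is ever needed. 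You instead commit to the diagonal embedding ($\varphi_K(y_{4i})=\binom{n+1}{i}a_4^{\,i}$), cancel the $Sp(n-1)$ classes $z_4,\dots,z_{4n-4}$ while keeping the $H$-side $Sp(1)$ generator, arrive at the closed form $d(sy_{4n})=\bigl((c_4-b_4)^{n+1}-c_4^{\,n+1}\bigr)/b_4$, and then must solve for the $\beta_r$ a posteriori. That last step is where your extra work lives, but your own observation already closes it: after the sign normalisation the coefficient of $b_4^{\,n-1}c_4$ forces $\beta_3=\tfrac{n+1}{n-1}$, and the remaining coefficients of $b_4^{\,n-i}c_4^{\,i}$, $i=2,\dots,n$, form a square triangular system in $\beta_7,\dots,\beta_{4n-1}$ with unit diagonal, hence uniquely solvable — the closing induction on $n$ you anticipate is unnecessary. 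What each route buys: the paper's argument is cleaner and covers any embedding of the left $Sp(1)$ (any admissible $\beta_r$), while yours produces explicit values ($\beta_3=\tfrac{n+1}{n-1}$, $\beta_{4n+3}=1$, and $\beta_3=3$ for $n=2$), which is precisely the specialisation the paper itself uses later in the proof of Theorem \ref{thm34}, so your version arguably documents more of the structure that the paper leaves implicit.
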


\begin{proof}
Let us compute the Sullivan model of the biquotient space
\[
Sp(1) \setminus Sp(n+1) / Sp(1) \times Sp(n-1),
\]
with the following inclusions maps
\[
Sp(1) \xrightarrow{i_K} Sp(n+1) \xleftarrow{i_H} Sp(1) \times Sp(n-1).
\]

The cdga models the classifying spaces are as follows:
\begin{align*}
    & BSp(n+1) = (\Lambda(a_4, a_8, \dots, a_{4n+4}), 0), BSp(1)=(\Lambda y_4, 0) \\
    & BSp(1) \times BSp(n-1) = (\Lambda(b_4, b_8, \dots, b_{4n-4}) \otimes \Lambda(x_4), 0).
\end{align*}

We now consider the pullback diagram of classifying spaces
\[
(\Lambda(b_4, \dots, b_{4n-4}) \otimes \Lambda(x_4), 0) 
\xleftarrow{\varphi_H} 
(\Lambda(a_4, \dots, a_{4n+4}), 0)
\xrightarrow{\varphi_K}
(\Lambda(c_4), 0),
\]
where the maps \(\varphi_H\) and \(\varphi_K\) are induced by the classifying maps $i_H$ and $i_K$ respectively.  To construct the Sullivan model of $Sp(1) \setminus Sp(n+1) / Sp(1) \times Sp(n-1)$, we make use of Theorem \ref{thm21} to obtain
\[
(\Lambda(x_4, c_4, b_4, \dots, b_{4n-4},  a_3, a_7, \dots,a_{4n-5}, a_{4n-1}, a_{4n+3}), d),
\]
with 
\begin{align*}
    & dx_4=dc_4=db_4=db_8=\cdots=db_{4n-4}=0,  da_3 =  x_4+b_4-\beta_{3}c_4, \\
    & da_7 = x_4b_4+b_8 - \beta_{7}c_4^2, da_{11} = x_4b_8+b_{12} - \beta_{11}c_4^3, \cdots, \\
    & da_{4n-5}=x_4b_{4n-8} + b_{4n-4} - \beta_{4n-5}c_4^{n-1}, da_{4n-1}=x_4b_{4n-4} - \beta_{4n-1}c_4^{n-1}, \\
    & da_{4n+3}= - \beta_{4n+3}c_4^{n-1} \;\; \text{for}\;\; r=\{3,7,\cdots, 4n+3\} \;\; \text{such that}\;\; \beta_{r} \in \mathbb{Q}.
\end{align*}
Since, $da_3 =  x_4+b_4-\beta_{3}c_4 $ is a linear differential. We apply the change of variable $x_4 =t_4-b_4+\beta_{3}c_4$ to obtain the following Sullivan model  
\[
(\Lambda(t_4, c_4, b_4, \dots, b_{4n-4},  a_3, a_7, \dots,a_{4n-5}, a_{4n-1}, a_{4n+3}), d),
\]
with 
\begin{align*}
    & dt_4=dc_4=db_4=db_8=\cdots=db_{4n-4}=0,  da_3 =  t_4, \\
    & da_7 = (t_4-b_4+\beta_{3}c_4)b_4+b_8 - \beta_{7}c_4^2, da_{11} = (t_4-b_4+\beta_{3}c_4)b_8+b_{12} - \beta_{11}c_4^3, \cdots, \\
    & da_{4n-5}=(t_4-b_4+\beta_{3}c_4)b_{4n-8} + b_{4n-4} - \beta_{4n-5}c_4^{n-1}, \\
    & da_{4n-1}=(t_4-b_4+\beta_{3}c_4)b_{4n-4} - \beta_{4n-1}c_4^{n-1},  da_{4n+3}= - \beta_{4n+3}c_4^{n-1}.
\end{align*}
Simplifying the Sullivan model by quotienting
out with contractible ideal generated by $a_3$ and $t_4$. We get the Sullivan model of $Sp(1) \setminus Sp(n+1) / Sp(1) \times Sp(n-1)$ as 

\[
(\Lambda( c_4, b_4,b_8 \dots, b_{4n-4}, a_7, \dots,a_{4n-5}, a_{4n-1}, a_{4n+3}), d),
\]
with 
\begin{align*}
    & dc_4=db_4=db_8=\cdots=db_{4n-4}=0,   \\
    & da_7 = (-b_4+\beta_{3}c_4)b_4+b_8 - \beta_{7}c_4^2, da_{11} = (-b_4+\beta_{3}c_4)b_8+b_{12} - \beta_{11}c_4^3, \cdots, \\
    & da_{4n-5}=(-b_4+\beta_{3}c_4)b_{4n-8} + b_{4n-4} - \beta_{4n-5}c_4^{n-1}, \\
    & da_{4n-1}=(-b_4+\beta_{3}c_4)b_{4n-4} - \beta_{4n-1}c_4^{n-1},  da_{4n+3}= - \beta_{4n+3}c_4^{n-1}.
\end{align*}
 By letting $t_8 = (-b_4+\beta_{3}c_4)b_4+b_8 - \beta_{7}c_4^2 $, the Sullivan model of biqoutient space $Sp(1) \setminus Sp(n+1) / Sp(1) \times Sp(n-1)$  becomes 
\[
(\Lambda( c_4, b_4, t_8, \dots, b_{4n-4}, a_7, \dots,a_{4n-5}, a_{4n-1}, a_{4n+3}), d),
\]
with 
\begin{align*}
    & dc_4=db_4=dt_8=\cdots=db_{4n-4}=0,   \\
    & da_7 = t_8 , da_{11} = (-b_4+\beta_{3}c_4)[t_8+(b_4-\beta_{3}c_4)b_4+ \beta_{7}c_4^2 ]+b_{12} - \beta_{11}c_4^3, \cdots, \\
    & da_{4n-5}=(-b_4+\beta_{3}c_4)b_{4n-8} + b_{4n-4} - \beta_{4n-5}c_4^{n-1}, \\
    & da_{4n-1}=(-b_4+\beta_{3}c_4)b_{4n-4} - \beta_{4n-1}c_4^{n-1},  da_{4n+3}= - \beta_{4n+3}c_4^{n-1}.
\end{align*}
Therefore, canceling the acyclic ideal generated by $a_7$ and $t_8$. The Sullivan model of $Sp(1) \setminus Sp(n+1) / Sp(1) \times Sp(n-1)$   reduces to 
\[
(\Lambda( c_4, b_4, b_{12}, \dots, b_{4n-4}, a_{11}, \dots,a_{4n-5}, a_{4n-1}, a_{4n+3}), d),
\]
with 
\begin{align*}
    & dc_4=db_4=db_{12}=\cdots=db_{4n-4}=0,   \\
    & da_{11} = (-b_4+\beta_{3}c_4)[(b_4-\beta_{3}c_4)b_4+ \beta_{7}c_4^2 ]+b_{12} - \beta_{11}c_4^3, \cdots, \\
    & da_{4n-5}=(-b_4+\beta_{3}c_4)b_{4n-8} + b_{4n-4} - \beta_{4n-5}c_4^{n-1}, \\
    & da_{4n-1}=(-b_4+\beta_{3}c_4)b_{4n-4} - \beta_{4n-1}c_4^{n},  da_{4n+3}= - \beta_{4n+3}c_4^{n+1}.
\end{align*}

Continuing this iterative substitution process up to the change of variable
\[
\begin{aligned}
t_{4n-4} &= (-b_4 + \beta_3 c_4) \Big[ (b_4 - \beta_3 c_4) \Big[ (b_4 - \beta_3 c_4) \cdots \Big[ (b_4 - \beta_3 c_4) \Big[ (b_4 - \beta_3 c_4)b_4 + \beta_7 c_4^2 \Big] \\
&\quad + \beta_{11} c_4^3 \Big] \cdots + \beta_{4n-13} c_4^{n-3} \Big] + \beta_{4n-9} c_4^{n-2} \Big] + b_{4n-4} - \beta_{4n-5} c_4^{n-1},
\end{aligned}
\]
we obtain a Sullivan model in which the acyclic ideal generated by \( t_{4n-4} \) and \( a_{4n-5} \) can be canceled. The resulting Sullivan model of $Sp(1) \setminus Sp(n+1) / Sp(1) \times Sp(n-1)$ simplifies to
\[
(\Lambda(c_4, b_4, a_{4n-1}, a_{4n+3}), d),
\]
with
\begin{align*}
    & dc_4=db_4=0,  da_{4n-1}=(-b_4+\beta_{3}c_4)b_{4n-4} - \beta_{4n-1}c_4^{n},  da_{4n+3}= - \beta_{4n+3}c_4^{n+1},
\end{align*}
where  \[
\begin{aligned}
b_{4n-4} &= (b_4 - \beta_3 c_4) \Big[ (b_4 - \beta_3 c_4) \Big[ (b_4 - \beta_3 c_4) \cdots \Big[ (b_4 - \beta_3 c_4) \Big[ (b_4 - \beta_3 c_4)b_4 + \beta_7 c_4^2 \Big] \\
&\quad + \beta_{11} c_4^3 \Big] \cdots + \beta_{4n-13} c_4^{n-3} \Big] + \beta_{4n-9} c_4^{n-2} \Big] +  \beta_{4n-5} c_4^{n-1}.
\end{aligned}
\]
Moreover, the cohomology algebra $H^*(Sp(1) \setminus Sp(n+1) / Sp(1) \times Sp(n-1),\mathbb{Q})$ is given by 
\[ \Lambda(c_4,b_4)\Big/ \left( (-b_4+\beta_{3}c_4)b_{4n-4} - \beta_{4n-1}c_4^{n},  - \beta_{4n+3}c_4^{n+1} \right). \]
\end{proof}

\begin{thm}
\label{thm33}   The projectivization of a quaternionic vector bundle over a $4n$-dimensional sphere $ \tau: \mathbb{H}^{n} \rightarrow E \rightarrow S^{4n}$ is given by $P(\tau) : \mathbb{H}P^{n-1} \rightarrow P(E) \rightarrow S^{4n}$. Then the total space $P(E)$ of the projectivization bundle $P(\tau)$ has a rational homotopy type of $Sp(1) \setminus Sp(2n) / Sp(2n-1)$.
\end{thm}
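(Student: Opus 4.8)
The plan is to produce two Sullivan models — one for the total space $P(E)$ via Proposition \ref{prop22}, the other for the biquotient via Theorem \ref{thm21} — reduce each to its minimal model by canceling acyclic generators, and observe that both collapse to the minimal model of $\mathbb{H}P^{2n-1}$.

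First I would specialize Proposition \ref{prop22} to $M = S^{4n}$. The minimal model of $S^{4n}$ is $(\Lambda(u_{4n}, w_{8n-1}), d)$ with $du_{4n}=0$ and $dw_{8n-1}=u_{4n}^2$. Since $H^{4i}(S^{4n};\mathbb{Q})=0$ for $1\le i\le n-1$, every Pontryagin class $p_i$ vanishes for degree reasons except the top one, $p_n = \lambda u_{4n}$ for some $\lambda\in\mathbb{Q}$. Proposition \ref{prop22} then gives the model
\[
(\Lambda(u_{4n}, w_{8n-1}, x_4, x_{4n-1}), D), \quad Dx_4=0,\; Dx_{4n-1}=x_4^n+\lambda u_{4n},\; Dw_{8n-1}=u_{4n}^2.
\]
Assuming the bundle has nontrivial top Pontryagin class ($\lambda\neq0$), the summand $\lambda u_{4n}$ is linear, so $(x_{4n-1},u_{4n})$ is an acyclic pair; canceling it substitutes $u_{4n}\mapsto -\lambda^{-1}x_4^n$ and leaves $(\Lambda(x_4,w_{8n-1}),D)$ with $Dw_{8n-1}=\lambda^{-2}x_4^{2n}$, which is precisely the minimal model of $\mathbb{H}P^{2n-1}$.

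Next I would build the biquotient model. Using $H^*(BSp(2n))=\Lambda(a_4,\dots,a_{8n})$, $H^*(BSp(2n-1))=\Lambda(v_4,\dots,v_{8n-4})$, and $H^*(BSp(1))=\Lambda(c_4)$, Theorem \ref{thm21} produces $(\Lambda(v_4,\dots,v_{8n-4},c_4,sa_4,\dots,sa_{8n}),d)$ with $d(sa_{4j})=\varphi_H(a_{4j})-\varphi_K(a_{4j})$. The block inclusion $Sp(2n-1)\hookrightarrow Sp(2n)$ gives $\varphi_H(a_{4j})=v_{4j}$ for $j\le 2n-1$ and $\varphi_H(a_{8n})=0$, while for $Sp(1)\hookrightarrow Sp(2n)$ degree forces $\varphi_K(a_{4j})=\gamma_j c_4^{\,j}$ for rationals $\gamma_j$, with $\gamma_{2n}\neq 0$ for the relevant block-diagonal embedding (where the total symplectic class is $(1+c_4)^{2n}$, so $\gamma_{2n}=1$). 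Canceling the acyclic pairs $(sa_{4j},v_{4j})$ for $j=1,\dots,2n-1$, each differential being linear in $v_{4j}$, leaves $(\Lambda(c_4,sa_{8n}),d)$ with $d(sa_{8n})=-\gamma_{2n}c_4^{2n}$ — again the minimal model of $\mathbb{H}P^{2n-1}$.

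Since both models reduce to $(\Lambda(x_4,z_{8n-1}),\,dx_4=0,\,dz_{8n-1}=\mathrm{const}\cdot x_4^{2n})$, with the nonzero constants absorbed by rescaling the degree-$(8n-1)$ generator, the two cdgas are quasi-isomorphic, which establishes $P(E)\simeq_{\mathbb{Q}} Sp(1)\setminus Sp(2n)/Sp(2n-1)$. The main obstacle is the two nonvanishing checks that drive the final cancellation: confirming $\lambda\neq 0$ for the bundle in question and $\gamma_{2n}=\varphi_K(a_{8n})\neq 0$ for the chosen embedding $Sp(1)\hookrightarrow Sp(2n)$. Everything else is routine cancellation of linear acyclic pairs, and the comparison is insensitive to the precise scalars; a dimension count ($\dim Sp(2n)-\dim Sp(2n-1)-\dim Sp(1)=8n-4=\dim\mathbb{H}P^{2n-1}$) provides a useful consistency check throughout.
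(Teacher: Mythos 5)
Your proposal is correct and follows essentially the same route as the paper: both build the model of $P(E)$ from Proposition \ref{prop22} with only the top Pontryagin class surviving on $S^{4n}$, build the biquotient model from Theorem \ref{thm21}, cancel linear acyclic pairs on each side until both collapse to $(\Lambda(x_4,z_{8n-1}),\, dz_{8n-1}=\mathrm{const}\cdot x_4^{2n})$, and conclude by an explicit quasi-isomorphism. Your version is in fact slightly more careful than the paper's, since you track the scalar $\lambda$ and the coefficient $\gamma_{2n}$ (which the paper silently normalizes to $1$) and state explicitly the nonvanishing hypotheses that the cancellation requires.
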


\begin{proof}
 The Sullivan model of $S^{4n}$ is $(\Lambda(a_{4n}, a_{8n-1}), d)$, where $da_{2} = 0$, and $da_{8n-1}=a_{4n}^{2}$. Also, the Sullivan model of $\mathbb{H}P^{n-1}$ is $\Lambda (x_{4}, x_{4n-1})$, where $dx_{4}=0$, and $dx_{4n-1} = x_{4}^{n}$. Then, the relative Sullivan model of $P(\tau)$ is
\[ (\Lambda(a_{4n}, a_{8n-1}), \mathrm{d}) \rightarrow (\Lambda(a_{4n}, a_{8n-1}) \otimes \Lambda (x_{4}, x_{4n-1}), D) \rightarrow (\Lambda (x_{4}, x_{4n-1}), d). \]
The classifying map is given by $ f : ( \Lambda(y_{8}, \cdots , y_{4n}) ,0) \rightarrow (\Lambda(a_{4n}, a_{8n-1}), \mathrm{d}).$ Also, the non-trivial Pontryagin classes are 
$ p_{n}=[f(y_{4n})]= [a_{4n}] \; \in H^{*}(S^{4n})$. In particular, the total space $P(E)$ of the projectivization bundle has a Sullivan model $$(\Lambda(a_{4n}, a_{8n-1}, x_{4}, x_{4n-1}), D),$$ where $$Da_{4n} = 0, \; Dx_{4} = 0, \; Da_{8n-1} = a_{4n}^{2}, \; Dx_{4n-1} = x_{4}^{n} + a_{4n}.$$
 Applying the change of variable $t_{4n} = x^{n}_{4} + a_{4n}$ where $Dx_{4n-1} = t_{4n}$, we get 
 \[ (\Lambda(t_{4n}, a_{8n-1}, x_{4}, x_{4n-1}), D), \]where
 \[ Da_{4n} = 0, \; Dx_{4} = 0, \; Da_{8n-1} = (t_{4n}-x_{4}^{n})^{2}, \; Dx_{4n-1} =  t_{4n}.\]
  Moreover, the Sullivan model of $P(E)$ is quasi-isomorphic to   \[ (\Lambda( x_{4}, a_{8n-1}), D), \]where
 \[  Dx_{4} = 0, \; Da_{8n-1} = x_{4}^{2n}.\]

 Now we consider the computation of the Sullivan model of $Sp(1)\setminus Sp(2n)/Sp(2n-1)$. We start with the canonical inclusions \begin{align*}
     &\varphi_H : \Lambda(v_3, v_7, v_{11}, \cdots, v_{8n-9}, v_{8n-5}, v_{8n-1}) \rightarrow \Lambda(z_4, z_8, \cdots, z_{8n-4})\\ & \text{and} \;\;  \varphi_K : \Lambda(v_3, v_7, v_{11}, \cdots, v_{8n-9}, v_{8n-5}, v_{8n-1}) \rightarrow \Lambda (b_4),
 \end{align*} 

obtained from applying Theorem \ref{thm21}. Furthermore, the Sullivan model of $Sp(1)\setminus Sp(2n)/Sp(2n-1)$ is given by 
 $$(\Lambda(b_4, z_4, z_8, \cdots, z_{8n-4}, v_3, v_7, v_{11}, \cdots, v_{8n-9}, v_{8n-5}, v_{8n-1}), d),$$
where \begin{align*}
    dv_3 =& \; z_4 - (n+1)b_4, dv_7 = z_8 - (n+1)b_4^4, dv_{11} = z_{12}-(n+1)b_4^3, \cdots, \\ dv_{8n-5}& = z_{8n-4} - (n+1)b_4^{2(n-1)}, dv_{8n-1} = -b_4^{2n}.
\end{align*} Applying the change of variable $t_4 = z_4 - (n+1)b_4$ we obtain $$(\Lambda(b_4, t_4, z_8, \cdots, z_{8n-4}, v_3, v_7, v_{11}, \cdots, v_{8n-9}, v_{8n-5}, v_{8n-1}), d),$$ 
where \begin{align*}
    dv_3 = &\;t_4, dv_7 = z_8 - (n+1)b_4^4, dv_{11} = z_{12}-(n+1)b_4^3, \cdots, \\dv_{8n-5}& = z_{8n-4} - (n+1)b_4^{2(n-1)}, dv_{8n-1} = -b_4^
    {2n}.
\end{align*}  Thus, $dv_3 = t_4$ is linear and by the cancellation of acyclic ideal generated by $v_3$ and $t_4$ we get a Sullivan model quasi-isomorphic to $$(\Lambda(b_4,  z_8, \cdots, z_{8n-4}, v_7, v_{11}, \cdots, v_{8n-5}, v_{8n-1}), d),$$
where \begin{align*} dv_7 = &~z_8 - (n+1)b_4^4, dv_{11} = z_{12}-(n+1)b_4^3, \cdots, \\
dv_{8n-5} &= z_{8n-4} - (n+1)b_4^{2(n-1)},\; dv_{8n-1} = -b_4^{2n}.\end{align*} 
Also, applying the substitution $t_8 =  a_8 - (n+1)b_4^4$ and canceling the acyclic ideal generated by $v_7$ and $t_8$ we obtain the Sullivan model which is quasi-isomorphic to  $$(\Lambda(b_4, z_{12} \cdots, z_{8n-4}, v_{11}, \cdots,  v_{8n-5}, v_{8n-1}), d),$$
where \[dv_{11} = z_{12}-(n+1)b_4^3, \cdots, dv_{8n-5} = z_{8n-4} - (n+1)b_4^{2(n-1)}, dv_{4n-1} = -b_4^{2n}.\] By continuing with this process of changing variables and canceling acyclic ideals we obtain a Sullivan model of  $Sp(1)\setminus Sp(2n)/Sp(2n-1)$ as  
$$(\Lambda(b_4, v_{8n-1}), d),$$ where $db_4 = 0$ and $dv_{8n-1} = -b_4^{2n}.$ Moreover, we consider a morphism 
\[ \eta : (\Lambda(b_4, v_{8n-1}), d) \to (\Lambda(x_4, a_{8n-1}), d), \]
 where $dx_4 = 0$ and $da_{8n-1} = x_4^{2n}.$ Therefore, the morphism $\eta$ defined  by $\eta(b_4)=x_4$ and $\eta(v_{8n-1})=-b_{8n-1}$ is a quasi-isomorphism. Hence, $P(E)$ has a rational homotopy type of a biquotient space $Sp(1)\setminus Sp(n)/Sp(n-1).$ In particular, we write $$P(E)\simeq_{\mathbb{Q}} Sp(1)\setminus Sp(2n)/Sp(2n-1).$$

\end{proof}

\begin{thm}
\label{thm34}    The total space $P(E)$ of the projectivization bundle  $P(\tau) : \mathbb{H}P^{1} \rightarrow P(E) \rightarrow \mathbb{H}P^{2}$ has a rational homotopy type that is quasi-isomorphic to biquotient space $Sp(1)\setminus Sp(3) / Sp(1) \times Sp(1)$.
\end{thm}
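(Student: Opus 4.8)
The plan is to compute two minimal Sullivan models — one for the total space $P(E)$ and one for the biquotient — and then exhibit an explicit isomorphism between them, following the template of Theorem~\ref{thm33}. Since $\mathbb{H}P^2$ has minimal model $(\Lambda(u_4,u_{11}),d)$ with $du_{11}=u_4^3$, and the fibre $\mathbb{H}P^1$ forces $n=2$, Proposition~\ref{prop22} produces
\[
\bigl(\Lambda(u_4,u_{11},x_4,x_7),D\bigr),\quad Du_4=Dx_4=0,\ Du_{11}=u_4^3,\ Dx_7=x_4^2+p_1x_4+p_2,
\]
where $p_1=\alpha u_4\in H^4(\mathbb{H}P^2)$ and $p_2=\beta u_4^2\in H^8(\mathbb{H}P^2)$ are the Pontryagin classes of the quaternionic tangent bundle; this model is minimal and of pure type. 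Specializing the second Proposition to $n=2$ (so that $b_{4n-4}=b_4$) gives the model of $Sp(1)\setminus Sp(3)/Sp(1)\times Sp(1)$ as
\[
\bigl(\Lambda(c_4,b_4,a_7,a_{11}),d\bigr),\quad dc_4=db_4=0,\ da_7=-b_4^2+\beta_3c_4b_4-\beta_7c_4^2,\ da_{11}=-\beta_{11}c_4^3,
\]
again minimal and pure. The structural parallel is immediate: each model has two closed degree-$4$ generators, one degree-$7$ generator whose differential is a binary quadric in the degree-$4$ generators, and one degree-$11$ generator whose differential is a perfect cube of a single degree-$4$ generator.

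The heart of the argument is to build a cdga morphism $\psi$ from the biquotient model to the $P(E)$ model that is bijective on generators; between minimal Sullivan algebras such a $\psi$ is an isomorphism, hence a quasi-isomorphism. Because both degree-$11$ differentials are perfect cubes, I would match those directions by setting $\psi(c_4)=u_4$ and $\psi(a_{11})=-\beta_{11}u_{11}$, and then write $\psi(b_4)=\mu_1u_4+\mu_2x_4$ and $\psi(a_7)=\rho x_7$. Commutation with the differential on $a_{11}$ is immediate, while comparing $\psi(da_7)$ with $D(\rho x_7)$ term by term forces $\rho=-\mu_2^2$, $\mu_1=\tfrac12(\beta_3+\mu_2\alpha)$, and the single scalar condition
\[
\mu_2^2(\alpha^2-4\beta)=\beta_3^2-4\beta_7.
\]
If this admits a nonzero rational solution $\mu_2$, then $\psi$ is bijective on the degree-$4$ space (determinant $\mu_2\neq0$) and on the odd generators (nonzero scalars $\rho$ and $-\beta_{11}$), so it is an isomorphism of minimal models and yields $P(E)\simeq_{\mathbb{Q}}Sp(1)\setminus Sp(3)/Sp(1)\times Sp(1)$.

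I expect the main obstacle to be the solvability of that scalar equation over $\mathbb{Q}$: it holds precisely when the discriminant ratio $(\beta_3^2-4\beta_7)/(\alpha^2-4\beta)$ is a square in $\mathbb{Q}$, equivalently when the binary quadrics $Dx_7$ and $da_7$ are $\mathbb{Q}$-equivalent. This is genuinely a condition, not automatic: both models are elliptic pure algebras whose differentials form a regular sequence (a quadric together with a cube in two variables), so both are formal, and the rational homotopy type is carried by the cohomology ring $\mathbb{Q}[s,t]/(s^3,\text{quadric})$; the theorem therefore reduces to an isomorphism of these rings, which is again the discriminant condition. Closing the argument thus requires the explicit values — the classes $p_1,p_2$ of $T\mathbb{H}P^2$ computed from $\mathbb{H}P^2=Sp(3)/Sp(2)\times Sp(1)$, fixing $\alpha,\beta$, and the coefficients $\beta_3,\beta_7$ extracted from the classifying-space maps $\varphi_H,\varphi_K$ for $Sp(1)\hookrightarrow Sp(3)\hookleftarrow Sp(1)\times Sp(1)$ via the symplectic Pontryagin/Newton identities — and verifying that the two discriminants agree up to a rational square.
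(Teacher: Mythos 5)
Your setup is sound and your algebra is correct: the reduction of the matching problem to the single scalar condition $\mu_2^2(\alpha^2-4\beta)=\beta_3^2-4\beta_7$ is exactly right, and your observation that both models are pure, elliptic and formal (so that everything is carried by the cohomology rings) is valid. But the proposal stops precisely where the mathematical content of the theorem lies. The statement is not that $P(E)$ and the biquotient are rationally equivalent \emph{provided} a discriminant condition holds; it asserts the equivalence outright, so the proof must actually compute the four numbers $\alpha,\beta,\beta_3,\beta_7$ and check that the two binary quadrics are $\mathbb{Q}$-equivalent. Without that verification your argument proves nothing — if the discriminant ratio happened not to be a rational square (it would not be, for instance, for discriminants $-3$ and $-1$), the conclusion would simply be false. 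Declaring this computation as remaining work is a genuine gap, not a routine loose end.

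The paper closes exactly this gap by working with explicit coefficients throughout. For $P(E)$ it takes the model $(\Lambda(x_4,y_4,x_7,y_{11}),d)$ with $dx_7=x_4^2+x_4y_4+y_4^2$ and $dy_{11}=y_4^3$ (i.e.\ $\alpha=\beta=1$, discriminant $-3$). For the biquotient it computes the classifying-space maps of the diagonal $Sp(1)\hookrightarrow Sp(3)$ and the block $Sp(1)\times Sp(1)\hookrightarrow Sp(3)$, getting $dv_3=a_4+c_4-3b_4$, $dv_7=a_4c_4-3b_4^2$, $dv_{11}=-b_4^3$ (so in your notation $\beta_3=3$, $\beta_7=3$, $\beta_{11}=1$, discriminant $9-12=-3$). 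After canceling the acyclic ideal $(v_3,t_4)$ the quadric becomes $3b_4(a_4-b_4)-a_4^2$, and the explicit linear change of variables $\overline{x}_4=a_4-b_4$, $\overline{y}_4=-b_4$ turns it into $-(\overline{x}_4^2+\overline{x}_4\overline{y}_4+\overline{y}_4^2)$, matching the $P(E)$ quadric up to sign; in your parametrization this is the solution $\mu_2=\pm 1$, $\mu_1$ determined accordingly. So your framework is the same as the paper's (two Sullivan models plus an explicit isomorphism between them), and it would close once you carry out the two computations you name: the symplectic Pontryagin data of $\mathbb{H}P^2=Sp(3)/Sp(2)\times Sp(1)$ giving $\alpha=\beta=1$, and the restriction of the $Sp(3)$ Pontryagin generators to the two subgroups giving $(\beta_3,\beta_7,\beta_{11})=(3,3,1)$; both discriminants are then $-3$ and the quadrics are $\mathbb{Q}$-equivalent. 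As a minor point, you should also not treat the $\beta_r$ in the paper's second proposition as free parameters to be "extracted" later: they are forced by the diagonal embedding, and quoting them correctly is part of the proof.
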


\begin{proof} A cdga model of $\mathbb{H}P^{1}$ is $(\Lambda (x_{4},\; x_{7}), d)$ with $dx_4=0$ and $dx_{7} = x_{4}^{2}$. The cdga model of $\mathbb{H}P^{2}$ is $(\Lambda (y_{4}, \; y_{11}),d)$, where $dy_4=0$ and $dy_{11} = y_{4}^{3}$. Then with reference to \textit{Proposition \ref{prop22},} the cdga model of the total space $P(E)$ of the projectivization bundle $$P(\tau) : \mathbb{H}P^{1} \rightarrow P(E) \rightarrow \mathbb{H}P^{2}$$ is given by 
$$(\Lambda(x_{4},\; y_{4},\; x_{7},\; y_{11}), \; d), $$
 with $dx_{4}=dy_{4}=0$,
 $ dx_{7}=x_{4}^{2}+ x_{4}y_{4} + y_{4}^{2}$ and $dy_{11} = y_{4}^{3}$. Moreover, the cohomology algebra of $P(E)$ is $$ H^{*}(P(E), \mathbb{Q}) = \Lambda (y_4, x_{4}) /\left(x_{4}^{2}+x_{4}y_{4}+y_{4}^{2}, \; y_4^3 \right). $$

Now we compute the Sullivan model of the biquotient space $Sp(1)\setminus Sp(3) / Sp(1) \times Sp(1)$. The inclusions morphisms are $i_{H} : H = Sp(1) \times Sp(1) \hookrightarrow Sp(3)$ and $i_{K} : K = Sp(1) \hookrightarrow Sp(3) = G.$
Also, the induced models of $B_{i_{H}} : BH \rightarrow BG$ and $B_{i_{K}} : BK \rightarrow BG$ are $\varphi_{H} : (\Lambda(v_{4}, v_{8}, v_{12}), 0) \rightarrow (\Lambda(a_{4}, c_{4}), 0)$ and $\varphi_{K} : (\Lambda(v_{4}, v_{8}, v_{12}), 0) \rightarrow (\Lambda(b_{4}), 0)$ respectively.
 The biquotient $K\setminus G / H$ is the homotopy pullback of the following diagram
\begin{center}
\begin{tikzpicture}[node distance=1.5cm and 2cm]
\node(a){$BH$};
\node(b)[left = of a]{$K\setminus G / H$};
\node(e)[below = of a]{$BG$};
\node(f)[below = of b]{$BK$};
\draw [<-](a)-- node[above = 0.1cm]{}(b);
\draw [<-](e)-- node[above = 0.1cm]{$B_{i_{K}}$}(f);
\draw [<-](e)-- node[right = 0.1cm]{$B_{i_{H}}$}(a);
\draw [<-](f)-- node[above = 0.1cm]{}(b);
\end{tikzpicture}
\end{center}
Applying \textit{Theorem \ref{thm21},} the Sullivan model of  $K\setminus G / H = Sp(1)\setminus Sp(3) / Sp(1) \times Sp(1)$ is 
\[((\Lambda(a_{4}, c_{4}) \oplus \Lambda b_{4}) \otimes \Lambda(v_{3}, v_{7}, v_{11}), d),  \] where 
  $dv_{3}=a_{4}+c_{4}-3b_{4},
dv_{7}=a_{4}c_{4}-3b^2_{4}, 
dv_{11}=-b^3_{4}. $ Using the substitution $t_{4} = a_{4}+c_{4}-3b_{4}$ the Sullivan model $((\Lambda(a_{4}, b_{4},  c_{4}) \otimes \Lambda(v_{3}, v_{7}, v_{11}), d)$  is quasi-isomorphic to
$$((\Lambda(a_{4}, b_{4},  t_{4}) \otimes \Lambda(v_{3}, v_{7}, v_{11}), d),$$  where $ dv_{3}=t_{4},
dv_{7}=a_{4}(t_4+3b_{4}-a_{4})-3b^2_{4}$ and $ 
dv_{11}=-b_{4}^3.$ By the cancellation of the acyclic ideal generated by $v_3$ and $t_4$ the Sullivan model $(\Lambda(a_{4}, b_{4},  t_{4}, v_{3}, v_{7}, v_{11}), d)$ is quasi-isomorphic to \[(\Lambda(a_{4}, b_{4}, v_{7}, v_{11}), d),\] where $da_{4}=db_{4}=0, dv_{7}=3b_{4}(a_{4}-b_{4})-a^2_{4}$ and $dv_{11}=-b_{4}^3.$ Then applying the change of variables using $\overline{x}_4=a_4-b_4$, $\overline{y}_4=-b_4$, $\overline{x}_7=v_7$ and $\overline{y}_{11}=v_{11}$ we get \[(\Lambda(\overline{x}_{4}, \overline{y}_{4}, \overline{x}_{7}, \overline{y}_{11}), d),\] where $d\overline{x}_{4}=d\overline{y}_{4}=0, d\overline{x}_{7}=-\overline{x}_{4}^2 - \overline{x}_{4}\overline{y}_{4} - \overline{y}^2_{4}$ and $d\overline{y}_{11}=-\overline{y}_{4}^3.$ Solving the system of equations given by $  3b_{4}(a_{4}-b_{4})-a^2_{4} = -\overline{x}_{4}^2 - \overline{x}_{4}\overline{y}_{4} - \overline{y}^2_{4}$, $-b_4^3 = -\overline{y}_{4}^3$ 
we obtain $\overline{x}_4=a_4-b_4$ and $\overline{y}_4=-b_4$.

A morphism $$\Bar{f} : P(E) \rightarrow Sp(1)\setminus Sp(3) / Sp(1) \times Sp(1)$$ yields the following cdga model $$ f :  (\Lambda(\overline{x}_{4}, \overline{y}_{4}, \overline{x}_{7}, \overline{y}_{11}), d) \rightarrow (\Lambda(x_{4}, y_{4}, x_{7}, y_{11}), d),$$ such that 
$ f(\overline{x}_{4}) = x_{4}, \; f(\overline{x}_{7}) = x_{7},$ $f(\overline{y}_{4}) = y_{4}, \;
f(\overline{y}_{11}) = y_{11} 
$. Hence, $\Bar{f}$ is a quasi-isomorphism and the total space $P(E)$ of the projectivization bundle $P(\tau)$ has the rational homotopy type of $Sp(1)\setminus Sp(3) / Sp(1) \times Sp(1)$.

\end{proof}


\end{document}